\newtheorem{theorem}{Theorem}
\newtheorem{cor}[theorem]{Corollary}
\newtheorem{lemma}[theorem]{Lemma}
\newtheorem{prop}[theorem]{Proposition}
\newtheorem{remark}[theorem]{Remark}
\theoremstyle{definition}
\definecolor{magenta}{RGB}{203,0,150}
\definecolor{blueish}{RGB}{0,35,211}
\definecolor{greenish}{RGB}{0,100,20}
\newcommand{\h}{\hspace{2mm}}  
\title{Palindromic words in simple groups}
\date{\today}
\author{Elisabeth Fink\footnote{The first author was supported by the ERC-StG 257110 ``RaWG''} \quad Andreas Thom\footnote{The second author was supported by the ERC-StG 277728 ``GeomAnGroup''}}
\begin{document}

\onehalfspace

\selectlanguage{british}

\maketitle

\begin{abstract}
A palindrome is a word that reads the same left-to-right as right-to-left. We show that every simple group has a finite generating
set $X$, such that every element of it can be written as a palindrome in the letters of $X$. Moreover, every simple group
has palindromic width $pw(G,X)=1$, where $X$ only differs by at most one additional generator from any given generating set. On
the contrary, we prove that all non-abelian finite simple groups $G$ also have a generating set $S$ with $pw(G,S)>1$. 

As a by-product of our
work we also obtain that every just-infinite group has finite palindromic width with respect to a finite generating set. This
provides first examples of groups with finite palindromic width but infinite commutator width.

\medskip

\begin{center}
\textbf{Keywords:} palindrome, simple groups, just-infinite groups

\textbf{2010 Mathematics Subject Classification: 20E32, 20F69}
\end{center}

\end{abstract}

\section{Introduction}

Words in simple groups have been intensively studied during the last decades. A well-known example is the proof of the Ore
conjecture \cite{ore_conj}, which states that in every non-abelian finite simple group, every element can be written as a commutator. In
other words, every finite simple group has commutator width $1$. It has later emerged \cite{muranov_simple_commwidth}, that there
exist infinite simple groups with infinite commutator width. A palindromic word is a word which reads the same left-to-right as
right-to-left. The study of the so-called palindromic width in various classes of groups has emerged in the last decade
(see for example \cite{bardakov_nilpotent}, \cite{bardakov}, \cite{bardakov_shpilrain_tolstykh}, \cite{me_palindromic},
\cite{rileySale}). In this paper we show that for both, finite and infinite simple groups, it is possible to add at most one
generator to any given generating set such that every element can be expressed as a single
palindrome with respect to this possibly modified generating set. In contrast to this, we also show that every non-abelian finite simple
group has a generating set under which there exist elements which cannot be expressed as a palindrome. 
We apply our techniques to
just-infinite groups, to see that all of them have a finite generating set with respect to which they have finite palindromic
width.

\section{Palindromes}

Let $F_n$ be the free group on $n$ generators $X:=\left\{x_1, \dots x_n\right\}$. We write $X^{\pm 1} := X \cup X^{-1}$, where
$X^{-1} := \{x_1^{-1},\dots,x_n^{-1}\}$.  An element $g$ of $F_n$ is called a
\emph{palindrome} if there exists some $k \in \mathbb N$ so that $g=\prod_{j=1}^k y_{i}$, where $y_{j}=y_{k-j+1}$ and $y_j \in
X^{\pm 1}$ for all $1 \leq j \leq k$. In other words, if it reads the same left-to-right
as right-to-left. Let $G$ be a group with $n$ generators. Then there exists a natural map $\pi: F_n \to G$. An
element $g$ of $G$ with generating set $S=\left\{s_1, \dots, s_n\right\}$ is called a \emph{palindrome} if at least one of its
preimages in $F_n$ is a palindrome.

\begin{remark}\label{rem_1}
If $G$ is a non-free group then there exists an element $r \in F_n$ with $\pi(r)=1$. Let now $p$ be a palindrome in $G$. By
definition there exists an element $s \in F_n$ that is a palindrome and such that $\pi(s)=p$. On the other hand we also have 
$\pi(sr)=p$, but $sr$ need not be a palindrome anymore. 
\end{remark}

We denote by $\overline{g}$ a \emph{reverse} element of $g \in G$. By this we mean the following: Let $S=\left\{s_1,
\dots, s_n\right\}$ be a generating set for $G$ and $g=\prod_{i=1}^k t_{i}$ be a presentation for $g$ with $t_i \in S^{\pm 1}$.
Then
$\overline{g}$ is given by $\overline{g} = \prod_{i=1}^k t_{k-i+1}$. 

\medskip

Every element of a group can always be expressed as a product of palindromes. Indeed, as every generator and every inverse of a generator is a palindrome by definition every element $g$ of $G$ can be written as a product of finitely many
palindromes, where the number of palindromes depends in general on $g$. We are interested in cases when this number can be bounded
independently of $g \in G$ or is equal to one for some generating sets.

For a group $G$ denote by $P_{G,X}$ the set of all palindromes with respect to the generating set $X$. We define the
\emph{palindromic
width} of $G$ with respect to a generating set $X$ as \[pw(G,X)=\sup_{g \in G}\left\{ \min_{k} \left\{ k \h | \h
g = \prod_{i=1}^k p_i, \h p_i \in P_{G,X}\right\}\right\}.\] 

This number might be infinite, however there are certain classes of groups in which this is finite. These are for example certain
extensions of nilpotent and solvable groups (\cite{bardakov_soluble}, \cite{bardakov}) or certain wreath products
(\cite{me_palindromic}, \cite{rileySale}).

\section{Relations and Words}

In this section we show that careful analysis of given relations in a group gives rise to a natural set of palindromes. We  use
this to prove our main result. Further, we use the Feit-Thompson theorem to show that non-abelian finite simple groups have a
generating set under which not every element can be written as a palindrome. This has already been used by the first author in
\cite{me_palindromic}.

\begin{lemma}[see \cite{me_palindromic}]\label{lemma_change_gen_set}
Assume $G$ is a finitely generated non-abelian group with respect to the generating set $X$, that has a relation in which
two non-commuting generators occur. Then $G$ has a
presentation with generating set $\tilde{X}$, in which there exists a relation $q$ in $G$ such
that $\overline{q}\neq 1$ in
$G$, where $\tilde{X}$ is obtained from $X$ by adding at most one generator.
\end{lemma}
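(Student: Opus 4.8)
The plan is to adjoin a single new generator $c$ to $X$ and then exhibit an explicit \emph{word} $q$ over $\tilde X^{\pm 1}$ with $\pi(q)=1$ but $\pi(\overline q)\neq 1$. As Remark~\ref{rem_1} makes clear, reversal is meaningful only on words and not on group elements, so the whole argument must stay at word level: I would write $q$ down concretely rather than argue with abstract relators.

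First I would record the mechanism linking reversal to relators. In $F_n$ one checks letter by letter that $\overline w=\sigma(w)^{-1}$, where $\sigma\in\Aut(F_n)$ is the involution determined by $x_i\mapsto x_i^{-1}$. Consequently ``$\overline r=1$ in $G$ for every relation $r$ over $X$'' holds exactly when $\sigma$ descends to an automorphism of $G$ inverting each element of $X$; should this fail, $\tilde X=X$ already works, and in any case the construction below is uniform. Since the hypothesis provides a relation over $X$ in which two non-commuting generators occur, fix $a,b\in X$ with $ab\neq ba$. Put $\tilde X:=X\cup\{c\}$ and present $G$ over $\tilde X$ by adjoining to any presentation of $G$ over $X$ the extra relator $c\,b^{-1}a^{-1}$ (equivalently, $c=ab$); this is a Tietze transformation, so $\tilde X$ still generates $G$. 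Now set
\[
q:=c\,b^{-1}a^{-1}.
\]
Then $\pi(q)=(ab)b^{-1}a^{-1}=1$, so $q$ is a relation in $G$ over $\tilde X$, whereas $\overline q=a^{-1}b^{-1}c$ yields $\pi(\overline q)=a^{-1}b^{-1}(ab)=a^{-1}b^{-1}ab\neq 1$, precisely because $ab\neq ba$. Thus $q$ is the desired relation, and $\tilde X$ is obtained from $X$ by adding the single generator $c$.

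The one point I would be careful about is the choice of the new generator: taking $c$ to be the product $ab$ of two non-commuting generators is exactly what destroys the left--right symmetry, because any automorphism of $G$ inverting $a$, $b$ and $ab$ simultaneously would force $a^{-1}b^{-1}=(ab)^{-1}=b^{-1}a^{-1}$, i.e.\ $ab=ba$. Everything else is a one-line computation — no estimates and no structure theory are involved — and the hypothesis enters only to supply the pair $a,b$. The same device is used by the first author in \cite{me_palindromic}.
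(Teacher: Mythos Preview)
Your argument is correct and follows the same idea as the paper: one adjoins the single generator $c=ab$ for a pair of non-commuting generators $a,b$, and the non-triviality of the reversed relation boils down to $[a,b]\neq1$. The only difference is cosmetic: the paper substitutes $c$ for the initial subword $xy$ of the given relation $r=xyw$ and compares $\overline{w}yx$ with $\overline{w}c=\overline{w}xy$, whereas you work directly with the Tietze relator $q=cb^{-1}a^{-1}$.

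Your route is slightly cleaner and, as you implicitly note, it does not actually use the hypothesis that some relation over $X$ involves \emph{both} non-commuting generators --- all you need is the existence of two non-commuting generators, which is automatic in any non-abelian group. So your version proves a marginally stronger statement than the one written (and than the paper's own proof establishes), at no extra cost.
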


\begin{proof}
Assume that for the pair of non-commuting generators $x$ and
$y$, there exists a relation $r$, in which $x$ and $y$ occur. Without loss of generality we can assume that $x$ and $y$ occur as
the subword $xy$ at the beginning of $r$. So we have
\[r=xy w=1,\]  where $w$ is another word in the generators of $G$. If also $\overline{r} = \overline{w} yx =1$, then we add a new
generator $c=xy$ and get
\[r = c w, \quad \overline{r} = \overline{w} c.\]
If we assume that also $\overline{r}=\overline{w}c=1$, then we have the following
\[\overline{w} c = \overline{w}xy = \overline{w}yx,\] which implies $[x,y]=1$, contradicting our assumption that $x$ and $y$ were
two non-commuting generators.
\end{proof}

\begin{prop}\label{prop_normal}
For any group $G$, the set of all palindromes $N:=\left\{\overline{w}w
\h | \h w=1\right\}$ is a normal subgroup of $G$.
\end{prop}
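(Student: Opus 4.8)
\emph{Proof plan.} Fix the generating set $S=\{s_1,\dots,s_n\}$ that underlies the definitions of ``palindrome'' and of $w\mapsto\overline w$, and let $\pi\colon F_n\to G$ be the canonical surjection with kernel $R:=\ker\pi\trianglelefteq F_n$; thus ``$w=1$ in $G$'' is the same as ``$w\in R$''. The first step is to record the elementary behaviour of word reversal, now viewed as a self-map $w\mapsto\overline w$ of the free group $F_n$: it is well defined (a free cancellation $\dots t\,t^{-1}\dots$ in a word becomes a free cancellation in the reversed word), it reverses products, $\overline{uv}=\overline v\,\overline u$, it is an involution, $\overline{\overline w}=w$, and it commutes with inversion, $\overline{w^{-1}}=(\overline w)^{-1}$ (both are the word read backwards with every letter inverted). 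In particular $w\mapsto\overline w$ is an anti-automorphism of $F_n$, and the composite $\phi\colon w\mapsto(\overline w)^{-1}$ is an automorphism of $F_n$.

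Next I would transport the problem into $F_n$. If $w\in R$ then $\overline w\,w=\overline w\cdot 1=\overline w$ in $G$, so, setting $\overline R:=\{\overline w: w\in R\}$, we have $N=\pi(\overline R)$; since $\pi$ is surjective it is enough to show that $\overline R$ is a normal subgroup of $F_n$. The quickest way is to note that $\overline R=\{(\overline w)^{-1}: w\in R\}$ — because $(\overline w)^{-1}=\overline{w^{-1}}$ and $R=R^{-1}$ — so that $\overline R=\phi(R)$ is the image of the normal subgroup $R$ under the automorphism $\phi$, hence normal. Should one prefer to avoid $\phi$ and argue by hand, the same facts do it directly: $1=\overline 1\in\overline R$; for $w_1,w_2\in R$ one has $\overline{w_1}\,\overline{w_2}=\overline{w_2w_1}\in\overline R$ since $w_2w_1\in R$; $(\overline w)^{-1}=\overline{w^{-1}}\in\overline R$ since $w^{-1}\in R$; and for $u\in F_n$, $w\in R$, putting $v:=(\overline u)^{-1}w\,\overline u\in R$ (normality of $R$) one computes $\overline v=\overline{\overline u}\cdot\overline w\cdot\overline{(\overline u)^{-1}}=u\,\overline w\,u^{-1}$, so $u\,\overline w\,u^{-1}\in\overline R$.

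Finally, once $\overline R\trianglelefteq F_n$ is established, the statement follows at once: the image $N=\pi(\overline R)$ of a normal subgroup under the surjection $\pi$ is normal in $G$. I do not expect a genuine obstacle here — the proposition is essentially a bookkeeping exercise — but there is one pitfall to respect: as already the definition of $\overline{\,\cdot\,}$ and Remark~\ref{rem_1} indicate, $\overline g$ is not an invariant of $g\in G$ but of a chosen word representing it, so every reversal identity must be applied to words in $F_n$ before projecting, and at each closure step one must name the explicit relator $v\in R$ that witnesses membership of the new element in $N$.
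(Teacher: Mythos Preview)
Your argument is correct. The route, however, is genuinely different from the paper's. The paper never lifts to $F_n$ or names the anti-automorphism; it works directly in $G$ with the representatives $\overline t\,t$ and verifies the subgroup and normality axioms by bare-hands manipulation, using freely that $t=1$ in $G$ to commute or absorb factors (e.g.\ rewriting $g^{-1}\overline t\,t\,g$ as $\overline{(\overline g\,t\,\overline{g^{-1}})}\cdot(\overline g\,t\,\overline{g^{-1}})$ after inserting the trivial factor $\overline g\,\overline{g^{-1}}$). Your approach instead observes that $N=\pi(\overline R)$ and that $\overline R=\phi(R)$ for the automorphism $\phi\colon w\mapsto(\overline w)^{-1}$ of $F_n$, whence normality is immediate. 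This is cleaner and more conceptual: it explains \emph{why} $N$ is normal (it is the shadow of an automorphic image of the relation subgroup) rather than merely verifying it, and it makes explicit the point you flag at the end, namely that all reversal identities must be justified at the level of words. The paper's computation, on the other hand, has the virtue of being entirely self-contained and produces, for each closure step, the explicit palindromic witness $\overline v\,v$ in the form actually used later. Your ``by hand'' alternative is essentially the paper's computation transcribed into $F_n$, so the two approaches converge there.
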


\begin{proof}
Let $\overline{t}t, \overline{s}s$ be two elements of $N$. Their product can be written as
\[\overline{t}t\cdot \overline{s}s = \overline{t}\overline{s}\cdot st,\] because we have that
$s=t=1$. It is again of the form
$\overline{(st)}(st)$ with $st=1$, hence an element of $N$. A similar argument shows that $N$ is closed under inverses.
If $g \in G$, then \[g^{-1}
\overline{t}tg = g^{-1}\overline{t}tg
\overline{g}\overline{g^{-1}}= g^{-1}\overline{t}g\overline{g}t
\overline{g^{-1}} = \overline{(\overline{g} t\overline{g^{-1}})} \cdot (\overline{g}t\overline{g^{-1}}),\] which is again an
element
of $N$ because
$\overline{g}t\overline{g^{-1}}=\overline{g}\cdot 1
\cdot \overline{g^{-1}}=1$.
\end{proof}

\begin{theorem}
Let $G$ be a non-abelian simple group generated by $X$. Then we have that $pw(G,\tilde{X})=1$, where $\tilde{X}$ is obtained from
$X$ by adding at most one generator.
\end{theorem}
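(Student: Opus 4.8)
The plan is to combine Lemma~\ref{lemma_change_gen_set} with Proposition~\ref{prop_normal}. First I would observe that a non-abelian simple group $G$, being non-abelian, has at least two non-commuting generators $x,y$ in any generating set $X$; and since $G$ is non-free (it is finitely generated and simple, hence has relations — here one should check the trivial cases, but a non-abelian simple group is certainly not free), some relation must involve two non-commuting generators. Thus the hypotheses of Lemma~\ref{lemma_change_gen_set} are met, so after enlarging $X$ to $\tilde X$ by at most one generator we obtain a relation $q=1$ in $G$ with $\overline q \neq 1$.

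Next I would pass to the normal subgroup $N = \{\,\overline w w \mid w=1\,\}$ of $G$ from Proposition~\ref{prop_normal}, where now everything is computed with respect to $\tilde X$. The element $\overline q\, q$ lies in $N$, and since $q=1$ in $G$ while $\overline q \neq 1$, we have $\overline q\, q = \overline q \neq 1$, so $N$ is a nontrivial normal subgroup. By simplicity of $G$, this forces $N = G$. The point of introducing $N$ is exactly this: every element of $N$ is, by construction, a palindrome — it has the form $\overline w w$ for some $w$, and writing $w = t_1 t_2 \cdots t_k$ with $t_i \in \tilde X^{\pm 1}$ gives $\overline w w = t_k \cdots t_1 t_1 \cdots t_k$, which reads the same forwards and backwards. (One should spell out that $\overline{w}$ is obtained by reversing a chosen word for $w$, so the concatenation is literally a palindromic word in the free group, hence its image is a palindrome in $G$ in the sense defined above.) Therefore $N = G$ means every element of $G$ is a single palindrome with respect to $\tilde X$, i.e. $pw(G,\tilde X) = 1$.

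The only genuine subtlety — and the step I'd expect to need the most care — is the degenerate case analysis at the very start: one must confirm that a finitely generated non-abelian simple group really does admit a relation involving two non-commuting generators, so that Lemma~\ref{lemma_change_gen_set} applies. If $G$ happens to be generated by a single element it is cyclic, hence abelian, contradiction; so $|X| \geq 2$ and we may pick $x,y \in X$ non-commuting (if all pairs of generators commuted, $G$ would be abelian). Then $G$ is not free of rank $\geq 2$ (free groups are not simple), so $\ker \pi \neq 1$; picking any nontrivial relation and, if necessary, conjugating and cyclically permuting, one arranges that two non-commuting generators both appear in it — this last bookkeeping is the part that wants a careful sentence or two rather than a one-liner. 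Everything after that is a direct application of the two preceding results, so the proof is short.
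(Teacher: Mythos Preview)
Your argument is correct and matches the paper's proof exactly: apply Lemma~\ref{lemma_change_gen_set} to obtain $\tilde X$ and a relation $q$ with $\overline q\neq 1$, observe that this makes the normal subgroup $N$ of Proposition~\ref{prop_normal} nontrivial, and conclude $N=G$ by simplicity, so that every element is a single palindrome. You are in fact more scrupulous than the paper in flagging that the hypothesis of Lemma~\ref{lemma_change_gen_set} (existence of a relation containing two non-commuting generators) needs verification; your suggested fix via conjugation works once you note that conjugating any nontrivial relation by a sufficiently long alternating word in two non-commuting generators $x,y\in X$ forces both letters to survive in the reduced form.
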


\begin{proof}
Lemma \ref{lemma_change_gen_set} states that there exists a generating set $\tilde{X}$ which is obtained from $G$ by adding at
most one generator, such that we have a relation $q$ with $\overline{q} \neq 1$. This implies that the normal subgroup $N$
from Proposition \ref{prop_normal} is not $\{1\}$. Because $G$ is simple, we have $N=G$ and every element of $G$ can be
written as a single palindrome with respect to $\tilde{X}$.
\end{proof}

In \cite{muranov_simple_commwidth} A. Muranov constructs a finitely generated infinite simple group of unbounded commutator width. This leads to
first examples of groups which have finite palindromic width $1$, but do not have finite commutator width. 

\begin{cor}
There exists a finitely generated simple group with infinite commutator width but finite palindromic width with respect to some finite generating set.
\end{cor}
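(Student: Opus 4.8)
The plan is to combine the Theorem above with A. Muranov's construction from \cite{muranov_simple_commwidth}. Muranov exhibits a finitely generated infinite simple group $G$ in which the commutator lengths of elements are unbounded; since the commutator width is by definition the supremum of these lengths, $G$ has infinite commutator width. First I would record that $G$ is necessarily non-abelian: an abelian simple group is cyclic of prime order, hence finite, whereas $G$ is infinite. Consequently the Theorem applies to $G$.

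Next I would fix any finite generating set $X$ of $G$ — one exists since $G$ is finitely generated — and invoke the Theorem to obtain a generating set $\tilde X$, differing from $X$ by at most one generator, with $pw(G,\tilde X)=1$. The set $\tilde X$ is still finite. It then remains to observe that replacing $X$ by $\tilde X$ does not affect the commutator width: the commutator width of a group is defined through products of commutators $[a,b]$ of \emph{arbitrary} group elements $a,b\in G$, so it is an invariant of the abstract group and is insensitive to the choice of generating set. Hence $G$, equipped with the generating set $\tilde X$, has palindromic width $1$ and, simultaneously, infinite commutator width, which establishes the statement.

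There is essentially no deep obstacle once the Theorem is available; the points requiring care are purely the bookkeeping ones just named — that Muranov's simple group is genuinely non-abelian so that the Theorem is applicable, that finiteness of the generating set survives the addition of a single generator, and that "infinite commutator width" is a property of $G$ rather than of a presentation, hence untouched by the change of generators used to make every element a single palindrome. One might additionally remark that the corollary shows the two notions are genuinely independent: bounding how efficiently elements decompose into palindromes says nothing about how efficiently they decompose into commutators.
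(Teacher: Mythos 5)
Your proposal is correct and follows exactly the paper's (implicit) argument: the corollary is obtained by applying the preceding Theorem to Muranov's finitely generated infinite simple group of unbounded commutator width. The extra bookkeeping you supply (non-abelianness of Muranov's group, finiteness of $\tilde X$, independence of commutator width from the generating set) is all sound and merely makes explicit what the paper leaves unstated.
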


A group $G$ is called \emph{just-infinite} if for every non-trivial normal subgroup $N \lhd G$, the quotient $G/N$ is finite.

\begin{cor}\label{thm_JI}
Let $G$ be a just-infinite group with generating set $X$. Then $pw(G,\tilde{X})<\infty$, where $\tilde{X}$
which is obtained from $X$ by adding at most one generator.
\end{cor}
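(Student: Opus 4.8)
The plan is to mimic the proof of the Theorem, replacing simplicity by just-infiniteness. First I would apply Lemma \ref{lemma_change_gen_set}: if $G$ is non-abelian and some relation involves two non-commuting generators, we pass to $\tilde X$ (one extra generator at most) so that there is a relation $q$ with $\overline q\neq 1$ in $G$. By Proposition \ref{prop_normal} the set $N=\{\overline w w \mid w=1\}$ is a normal subgroup of $G$, and since it contains $\overline q\, q = \overline q \neq 1$, it is non-trivial. Just-infiniteness then gives that $G/N$ is finite. Every element of $N$ is a single palindrome with respect to $\tilde X$, so every element of $G$ is a product of one palindrome and one coset representative of $N$; writing each of the finitely many coset representatives as a fixed finite product of palindromes (each generator and its inverse is a palindrome, so this is possible) yields a uniform bound, hence $pw(G,\tilde X)<\infty$.

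The remaining cases are the degenerate ones where Lemma \ref{lemma_change_gen_set} does not apply, and this is where a little care is needed. If $G$ is abelian then, being just-infinite, it is infinite cyclic or a finite extension thereof; in the infinite cyclic case $G\cong\mathbb Z$ every element $n$ is the palindrome $x^n$ with respect to $X=\{x\}$, so $pw=1$, and the general abelian just-infinite case is again handled by adjoining coset representatives of a finite-index $\mathbb Z$. If $G$ is non-abelian but \emph{no} relation involves two non-commuting generators, one has to argue that this essentially cannot happen for a just-infinite group: such a $G$ would decompose in a way forcing a large abelian (hence infinite-index-obstructing) piece, contradicting just-infiniteness; alternatively one observes that a finitely generated group in which every defining relation is a relation among commuting generators is (a quotient reflecting that) close to a right-angled structure, and its just-infinite quotients can be treated directly. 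I would phrase this as: either $G$ already has a usable relation, or $G$ has an abelian quotient of the relevant shape, and in all such cases the palindromic width is finite by the coset-representative trick.

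The main obstacle I expect is precisely the bookkeeping of these degenerate cases — making sure that ``adding at most one generator'' genuinely suffices in every configuration, and that a finitely generated just-infinite group always falls into one of the tractable situations above. The core mechanism (normal subgroup of palindromes $+$ finite quotient $+$ finitely many coset representatives, each a bounded product of palindromes) is robust and gives the bound $pw(G,\tilde X)\le 1+C$ where $C$ bounds the palindromic length of coset representatives of $N$; the only real work is verifying the hypothesis that $N\neq\{1\}$, which is exactly the content of Lemma \ref{lemma_change_gen_set} together with the harmless abelian exceptions.
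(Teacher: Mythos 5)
Your proposal is correct and follows essentially the same route as the paper's own proof: Lemma \ref{lemma_change_gen_set} to produce a relation $q$ with $\overline q\neq 1$, the normal subgroup $N$ of Proposition \ref{prop_normal} being non-trivial and hence of finite index by just-infiniteness, and coset representatives written letter-by-letter as bounded products of palindromes. Your extra second paragraph flags degenerate cases (abelian $G$, or no relation involving non-commuting generators) that the paper's proof silently skips; your treatment of the abelian case is fine, while the remaining case is left vague in your sketch --- but this is a gap in the paper's own argument as well, not a divergence from it.
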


\begin{proof}
By Lemma \ref{lemma_change_gen_set} we can ensure by passing from $X$ to $\tilde{X}$ that there exists a non-trivial element $g
\in G$ such
that $\overline{g}$=1. Hence $N = \left\{\overline{g}g \h | \h \overline{g}\neq 1, g=1\right\}$ is a normal subgroup
of $G$ different from $\{1\}$. Because $G$ is just-infinite, $N$ has finite index. Take $r$ to be a coset-representative of
shortest length $l_r$. Every
element $g$ of $G$ can be written as $g=r \cdot n$, where $n \in N$, with $n$ a palindrome. Hence every element of $G$ is a
product of at most $l_r+1$ palindromes. 
\end{proof}

The following theorem demonstrates that such a change of the generating set is necessary in certain cases.

\begin{theorem}
Let $G$ be a non-abelian finite simple group. Then $G$ has a generating set $X$ such that
$pw(G,X)>1$. 
Moreover, there exist generating sets $X_n$ of the alternating groups $A_n$, such that $pw(A_n,X_n)\geq n/4$ for all $n \geq 5$.
\end{theorem}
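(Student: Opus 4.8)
The plan is to choose, in both parts, a generating set consisting entirely of involutions, and to exploit one elementary observation which I would isolate as a preliminary lemma: if every element of a generating set $X$ of a group $G$ has order $2$, then $P_{G,X}=\{1\}\cup\bigcup_{x\in X}x^{G}$, i.e. the palindromes are precisely the identity together with the union of the conjugacy classes of the generators. Indeed, writing a palindrome as $y_1\cdots y_k$ with $y_j=y_{k-j+1}$ and using $x^{-1}=x$ for $x\in X$, an odd‑length palindrome $y_1\cdots y_m\,y_{m+1}\,y_m\cdots y_1$ equals $(y_1\cdots y_m)\,y_{m+1}\,(y_1\cdots y_m)^{-1}$, a conjugate of a generator, while an even‑length palindrome has equal central letters $y_m y_m=1$ and collapses, step by step, to $1$; conversely every conjugate $hxh^{-1}$ of a generator is visibly a palindrome once $h$ is written as a word in $X$.

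For the first assertion I would argue as follows. By the Feit--Thompson theorem a non‑abelian finite simple group $G$ has even order, hence (by Cauchy) contains an involution. The subgroup generated by all involutions of $G$ is normal and non‑trivial, so by simplicity it equals $G$, and since $G$ is finite there is a finite generating set $X$ of $G$ consisting of involutions. By the observation, every element of $P_{G,X}$ has order dividing $2$. If we had $pw(G,X)=1$, i.e. $P_{G,X}=G$, then $G$ would have exponent $2$ and hence be abelian, contradicting the hypothesis. Therefore $pw(G,X)>1$.

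For the second assertion, fix $n\ge5$ and put $t=(1\,2)(3\,4)\in A_n$. Its conjugacy class generates a non‑trivial normal subgroup of the simple group $A_n$, hence generates $A_n$; set $X_n:=t^{A_n}$, a finite generating set of involutions, each of which is a product of two disjoint transpositions. By the observation $P_{A_n,X_n}=\{1\}\cup t^{A_n}$, so every palindrome with respect to $X_n$ has support of size at most $4$. Since the support of a product of permutations is contained in the union of their supports, any product of $m$ palindromes has support of size at most $4m$. Choosing $\sigma\in A_n$ with support of size exactly $n$ — an $n$‑cycle when $n$ is odd, and a product of two disjoint cycles of lengths $2$ and $n-2$ (an even permutation) when $n$ is even — any expression of $\sigma$ as a product of $m$ palindromes forces $4m\ge n$, so $pw(A_n,X_n)\ge n/4$.

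The only genuinely non‑routine ingredient is the Feit--Thompson theorem: it is what guarantees that an arbitrary non‑abelian finite simple group contains involutions, and hence admits a generating set of involutions, to which the palindrome observation applies. Everything else is bookkeeping: checking the collapse of even and odd palindromes in involutions, checking that $X_n$ generates $A_n$, and checking that for every $n\ge5$ there is an even permutation with full support of size $n$ — all of which are immediate.
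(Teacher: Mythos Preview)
Your proof is correct and follows essentially the same route as the paper: a generating set of involutions (supplied via Feit--Thompson and simplicity), the observation that palindromes over involutions are trivial or conjugates of generators, and for $A_n$ the conjugacy class of $(1\,2)(3\,4)$ together with a support/fixed-point count. The only cosmetic differences are that the paper rules out $pw=1$ by exhibiting an element of odd order (since a non-abelian simple group is not a $2$-group) rather than invoking ``exponent $2$ $\Rightarrow$ abelian'', and it phrases the $A_n$ bound via fixed points rather than support; your explicit construction of a full-support even permutation just makes one step more explicit.
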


\begin{proof}
By the theorem of Feit-Thompson we have that $G$ must contain at least one element $g$ of order $2$. Then the normal closure
$\left<\left<g\right>\right>$ of $g$ in $G$ is equal to $G$. Hence $G$ has a generating set consisting of elements of order $2$.
Let $X$ be this generating set with generators $x_1, \dots, x_n$. Now if $p$ is a palindrome of even length, then $p$ has the form
$\overline{k}x_ix_ik$, for some $x_i \in X$ and $k \in G$. But each $x_i$ has order $2$, hence all palindromes of even length are
trivial. Because every generator has order two, we have the special case that $\overline{g}=g^{-1}$, so every palindrome is a
conjugate of a single generator, and as such must have order $2$ as well. If $G$ is non-abelian simple, then $G$
contains an element of odd order, because no non-abelian group $H$ with $|H|=2^n, n>1$, can be simple. However, if now $h$ is an element of
odd order in $G$,
it cannot be a conjugate of a generator $x_i$ and hence cannot be written as a single palindrome in the generators $x_1, \dots,
x_n$. This proves the first claim.

Consider now the alternating group $A_n$ and the permutation
$$\sigma_n:=(12)(34)(5) \cdots (n) \in A_n.$$ We set $X_n$ equal to the conjugacy class of $\sigma_n$. As noted above, $X_n$ generates $A_n$ and each palindrome is conjugate to $\sigma_n$. Thus, it is clear that any product of $k$ palindroms must fix at least $n-4k$ points and hence $pw(A_n,X_n) \geq n/4$. This proves the second claim.
\end{proof}

In particular, we can conclude from the previous theorem that there is no uniform upper bound on the palindromic width over all
non-abelian finite simple groups and all generating sets. However, the situation is different if the rank (in the sense of algebraic groups, i.e., the dimension of the Cartan subgroup of the associated algebraic group over an algebraically closed field -- or in case of $A_n$ just $n-1$) of the non-abelian
finite simple group {\it or} the size of the generating set remains bounded.

\begin{theorem}
There exists a constant $C>0$ such that for every non-abelian finite simple group $G$ of rank $r$ (in the sense of algebraic groups) and any generating set $X
\subset G$ we have
$$pw(G,X) \leq C \cdot \min\{ r, |X| \}.$$
\end{theorem}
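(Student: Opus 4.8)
The plan is to reduce everything to a single known ``word width'' estimate over finite simple groups and then feed it a conjugacy-type relation obtained from a member of the generating set. First I would fix $x \in X \setminus \{1\}$ and consider the conjugacy class $C = \{gxg^{-1} : g \in G\}$. Since $G$ is simple, $C$ generates $G$, and moreover — this is where the rank/generator bound enters — there is a constant $C_0$ (independent of $G$) so that every element of $G$ is a product of at most $C_0 \cdot \min\{r, |X|\}$ elements of $C^{\pm 1}$. For the bound in terms of $r$, this is the Liebeck--Pyber / Liebeck--Shalev style covering estimate: a single nontrivial conjugacy class has covering number $O(r)$ (with the convention $r = n-1$ for $A_n$). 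For the bound in terms of $|X|$, one instead uses that $X$ itself has width $\le |X|$ trivially as a generating set, but then each generator must be re-expressed; the cleanest route is to invoke that $G = \langle X \rangle$ already gives a word-length bound of the form $O(\mathrm{diam})$, which is too weak, so the honest argument must again be a covering-number statement: every generating set $X$ satisfies that $G$ is a product of $O(|X|)$ conjugates of a fixed $x \in X$, which follows from Nikolov--Pyber / Babai--Nikolov--Pyber product-growth results. I would cite these as black boxes.

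The second ingredient is to turn a bound ``$g$ is a product of $N$ conjugates of $x$'' into ``$g$ is a product of $O(N)$ palindromes in $X$.'' Here I would argue as in the proof of the main theorem: passing to $\tilde X$ by Lemma~\ref{lemma_change_gen_set} is not allowed since the statement is about the given $X$, so instead I note that a conjugate $w x w^{-1}$ with $w = t_1 \cdots t_m$, $t_i \in X^{\pm 1}$, can be written as $(t_1 \cdots t_m)\, x\, (t_m^{-1} \cdots t_1^{-1})$; if $\overline{w x \overline{w}}$ equals this, it is a palindrome outright, and in general $w x w^{-1} = \overline{w}^{-1}\cdot(\overline{w} x \overline{w}^{-1})\cdot \overline{w}$ expresses it as a product of boundedly many palindromes (each of $w$, $\overline{w}$ split into palindromic syllables at cost $O(1)$ per conjugator by the syllable trick, since every single letter is a palindrome). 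Concretely: any word of length $\ell$ is a product of at most $\ell$ palindromes (each letter), so a product of $N$ conjugators contributes $O(\sum \ell_i)$ — which is the wrong order. To keep the count at $O(N)$ one must be smarter: use that $wxw^{-1}$ is \emph{already} a palindrome whenever $w$ is its own reverse, and reduce a general conjugator to a self-reverse one at the cost of left-multiplying by $w\overline{w}^{-1}$, which lies in the normal subgroup $N$ of Proposition~\ref{prop_normal} applied to... — this is circular unless $N = G$, which needs the extra generator again.

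Consequently the real structure of the argument must be: (i) use Lemma~\ref{lemma_change_gen_set} in reverse, observing that \emph{if} $X$ already contains two non-commuting generators appearing in a relation with $\overline q \ne 1$ then $N = G$ by simplicity, so every conjugate of every letter is a single palindrome, and then $pw(G,X) \le C_0 \min\{r,|X|\}$ by the covering estimate directly; (ii) the only way this fails is if $X$ is ``palindromically degenerate'' — e.g. all generators of order $2$ with $\overline w = w^{-1}$ — and in that case one shows separately that the word width (product of conjugates of a fixed $x$) is itself a bound on $pw$ up to a factor of $2$, because each conjugate $gxg^{-1}$ of an involution $x$ equals $\overline{(gx\overline g^{-1})}$-type expression and unwinds into $2$ palindromes using $x = \overline x$. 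So the final bound is $pw(G,X) \le 2 C_0 \min\{r,|X|\}$, and setting $C = 2C_0$ finishes.

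The main obstacle is ingredient (i)–(ii): controlling $pw$ uniformly requires knowing that for \emph{every} generating set there is a member $x$ whose every conjugate costs $O(1)$ palindromes, and making the covering number genuinely linear in $\min\{r,|X|\}$ rather than in $r$ alone. The $|X|$ side is the harder half; I expect to lean on the Nikolov--Pyber bound that $(X \cup X^{-1} \cup \{1\})^N = G$ for $N = O(\log|G|/\log|X|)$ is \emph{not} linear in $|X|$, so in fact the correct tool is the product-of-conjugacy-classes theorem of Liebeck--Shalev giving that any $\Theta(r)$ conjugacy classes cover $G$, combined with the observation that a generating set of size $s$ forces the existence of an element whose conjugacy class has ``rank-like'' covering number $O(s)$ via the classification-based subgroup structure. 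This classification-dependent covering estimate is the crux and is the part I would most want to double-check against the literature.
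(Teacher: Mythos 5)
Your first ingredient is essentially the paper's: reduce everything to the statement that $G$ is covered by a product of boundedly many conjugacy classes of elements of $X$, and quote the Liebeck--Shalev covering estimates (the paper cites exactly this, together with Stolz--Thom) as a black box for the $\min\{r,|X|\}$ bound. That part is fine, and you are right not to try to reprove it.

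The genuine gap is your second ingredient, and you yourself sense where it breaks: you never establish that a conjugate $gxg^{-1}$ of a generator costs $O(1)$ palindromes, and none of the detours you propose closes this. Splitting the conjugator into letters gives, as you note, the wrong order of growth; reducing to self-reverse conjugators is, as you note, circular; and the case analysis (i)--(ii) is both unnecessary and incomplete -- in (i), if the subgroup of Proposition~\ref{prop_normal} were all of $G$ one would have $pw(G,X)=1$ outright and no covering estimate would be needed, while (ii) is only gestured at. The paper closes the gap with a one-line identity valid for every $g\in G$ and every palindrome $w$:
$$gwg^{-1} \;=\; \bigl(g\,\overline{g}\bigr)\cdot\bigl(\overline{g^{-1}}\,w\,g^{-1}\bigr).$$
If $g=t_1\cdots t_m$ with $t_i\in X^{\pm1}$, then $g\overline{g}=t_1\cdots t_m t_m\cdots t_1$ is a palindrome, $\overline{g^{-1}}\,w\,g^{-1}=t_1^{-1}\cdots t_m^{-1}\,w\,t_m^{-1}\cdots t_1^{-1}$ is again a palindrome whenever $w$ is, and $\overline{g}\cdot\overline{g^{-1}}=1$ makes the product collapse to $gwg^{-1}$. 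Hence every conjugate of a palindrome -- in particular of a single generator -- lies in $P_{G,X}^2$, with no hypothesis on $X$, no appeal to Lemma~\ref{lemma_change_gen_set}, and no case distinction on involutions. Combined with the covering estimate this gives $pw(G,X)\le 2N$, where $N$ is the number of steps needed to write every element of $G$ as a product of conjugates of elements of $X^{\pm1}$, which is the claimed bound. Your overall architecture is right; what is missing is precisely this identity, and without it the proof does not go through.
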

\begin{proof}
Let $g \in G$ and $w \in P_{G,X}$. Then, 
$gwg^{-1} = g \overline g \cdot \overline{g^{-1}}wg^{-1},$
and hence $gwg^{-1} \in P_{G,X}^2$. Thus,
$$\bigcup_{g \in G} gP_{G,X}g^{-1} \subset P_{G,X}^2.$$
Thus, we can bound the palindromic width by twice the number of steps one needs to generate $G$ by the conjugacy class of some
element in $X$. Using the main result in \cite{liebeck_shalev}, see also \cite{stolz_thom}, this implies the claim.
\end{proof}

\renewcommand{\bibname}{References}
\bibliographystyle{plain}  
\bibliography{bibliography}        

\medskip
\medskip

\begin{multicols}{2}
\textbf{Elisabeth Fink}\\
DMA\\
\'Ecole Normale Sup\'erieure\\
45 rue d'Ulm\\ 
75005 Paris, France\\
+33 (0)1 44 32 21 78\\
fink@ens.fr

\medskip
\medskip

\textbf{Andreas Thom}\\
Mathematisches Institut\\
Universit\"at Leipzig\\
PF 10 09 20\\
D-04009 Leipzig, Germany\\
+49 (0)341 9732185\\
andreas.thom@math.uni-leibzig.de
\end{multicols}

\end{document}